\newtheorem{theorem}{Theorem}
\newtheorem{definition}[theorem]{Definition}
\begin{document}
	
\title{Ribbonlength of twisted torus knots}
	
\author[H. Kim]{Hyoungjun Kim}
\address{College of General Education, Kookmin University, Seoul 02707, Korea}
\email{kimhjun@kookmin.ac.kr}
\author[S. No]{Sungjong No}
\address{Department of Mathematics, Kyonggi University, Suwon 16227, Korea}
\email{sungjongno@kgu.ac.kr}
\author[H. Yoo]{Hyungkee Yoo}
\address{Research Institute for Natural Science, Hanyang University, Seoul 04763, Korea}
\email{hyungkee@hanyang.ac.kr}

\keywords{ribbonlength, folded ribbon knot, twisted torus knot}
\subjclass[2020]{57K10, 57R65}
\thanks{The first author(Hyoungjun Kim) was supported by the National Research Foundation of Korea (NRF) grant funded by the Korea government Ministry of Science and ICT(NRF-2021R1C1C1012299).}
\thanks{The second author(Sungjong No) was supported by the National Research Foundation of Korea(NRF) grant funded by the Korea government Ministry of Science and ICT(NRF-2020R1G1A1A01101724).}

\begin{abstract}
The ribbonlength Rib$(K)$ of a knot $K$ is the infimum of the ratio of the length of any flat knotted ribbon with core $K$ to its width.
A twisted torus knot $T_{p,q;r,s}$ is obtained from the torus knot $T_{p,q}$ by twisting $r$ adjacent strands $s$ full twists.
In this paper, we show that the ribbonlength of $T_{p,q;r,s}$ is less then or equal to $2(\max \{ p, q, r \} +|s|r)$ where $p$ and $q$ are positive.
Furthermore, if $r \leq p-q$, then the ribbonlength of $T_{p,q;r,s}$ is less then or equal to $2(p+(|s|-1)r)$.
\end{abstract}
	
\maketitle

\section{Introduction} \label{sec:intro}
Knot theory began with the work of physicist Tait on the knotted vortex in the ether, which is supposed to be a medium for electromagnetic waves.
Since then,
numerous mathematicians have investigated the geometric and topological properties of knots,
leading to the development of knot theory and manifold theory.
After that, the physical properties of the knot were discussed again,
and a study was conducted on the ropelength as it relates to the energy of the knot~\cite{BS,BS2,FHW,O1,O2,O3}.
The {\em ropelength\/} is the infimum ratio of the length of given knot to its thickness.
Kauffman~\cite{K} suggests the ribbonlength which is the two-dimensional version of ropelength.

\begin{figure}[h!]
	\includegraphics{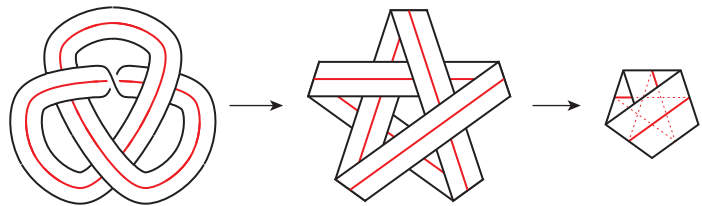}
	\caption{A knotted ribbon}
	\label{fig:11}
\end{figure}

The following definitions were introduced in~\cite{DHLM,DKTZ}.

\begin{definition}
Given an oriented polygonal knot (or link) diagram $K$, we say the folded
ribbon knot $K_w$ of width $w$ is allowed provided
	\begin{enumerate}
		\item The ribbon has no singularities (is immersed), except at the fold lines which are assumed
		to be disjoint.
		\item $K_w$ has consistent crossing information, and moreover this agrees
		\begin{enumerate}
			\item with the folding information at each fold, and
			\item with the crossing information of the knot diagram $K$.
		\end{enumerate}
	\end{enumerate}
\end{definition}

\begin{definition}
Take a knot (or link) $K$ and a positive real number $w$.
Then
$$\emph{\text{Rib}}(K)=\inf_{K' \in [K]_w}\frac{\emph{\text{Len}}(K'_w)}{w}$$
is called a ribbonlength of $K$
where $[K]_w$ is the set of knots (or links) that are equivalent to $K$ and have a folded ribbon knot $K'_w$.
\end{definition}

In ~\cite{K}, Kauffman showed that each upper bound of a ribbonlength of the truncated trefoil knot and the truncated figure eight knot are $\frac{4}{\sqrt{5-2\sqrt{5}}}$ and $\frac{32}{\sqrt{15}}$ respectively.
He used the pentagonal and the hexagonal folded ribbon knots.
For torus knots,
several researchers have given upper bounds of ribbonlength ~\cite{DHLM, KMRT}

In this paper, we deal with the ribbonlength of a specific type of knot called a twisted torus knot.
A {\em twisted torus knot\/} $T_{p,q;r,s}$ is obtained from the torus knot $T_{p,q}$
by twisting $r$ adjacent strands $s$ full twists.
Since the ribbonlength of any knot is the same value of its mirror image, it is sufficient to consider the case that $p$ and $q$ are positive.

\begin{theorem} \label{thm:twisted1}
For any positive integers $p,q,r$ with $r<p+q$ and nonzero integer $s$,
$$\emph{\text{Rib}}(T_{p,q;r,s}) \leq 2(\max \{ p, q, r \} +|s|r).$$
\end{theorem}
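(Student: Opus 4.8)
The plan is to prove the bound by an explicit construction: for every width $w>0$ I will exhibit an allowed folded ribbon knot $K'_w$ whose core is a planar polygonal diagram of $T_{p,q;r,s}$ and whose core length satisfies $\text{Len}(K'_w)\le 2w\bigl(\max\{p,q,r\}+|s|\,r\bigr)$. Since $\text{Rib}$ is the infimum of the scale-invariant ratio $\text{Len}(K'_w)/w$, this proves the theorem, so I fix $w=1$. The diagram will be assembled from two pieces glued along a common edge: a \emph{torus block} that realizes the underlying torus knot $T_{p,q}$ but with $r$ of its strands left unclosed, and a \emph{twist block}, attached to those $r$ strands, that installs the $s$ full twists. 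The sign of $s$ will be absorbed by choosing the handedness of the twisting gadget, so only $|s|$ enters the count.

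First I would construct the torus block. Writing $T_{p,q}$ as a closed braid (using $T_{p,q}=T_{q,p}$ where convenient), I would lay it out on a rectangular grid of unit cells confined to a strip that is $\max\{p,q\}$ cells wide, with folds occurring only at grid corners and with an over/under label chosen cell by cell; this is in the spirit of, and refines, the grid-confined folded ribbon diagrams of torus knots used in \cite{DHLM,KMRT}. The point is that the whole knot can be packed densely enough along such a strip that the core has length at most $2\max\{p,q\}$ (there is room for this, since $(\max\{p,q\})^2\ge pq$ dominates the crossing number of $T_{p,q}$). If $r>\max\{p,q\}$ I would widen the strip by $r-\max\{p,q\}$ extra parallel tracks running straight alongside the block, which replaces $\max\{p,q\}$ by $m:=\max\{p,q,r\}$ in the bound at no further cost. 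The essential feature to arrange is that $r$ of the strands of $T_{p,q}$ emerge along one short end of the block as $r$ adjacent, coherently oriented tracks, ready to be twisted; this is automatic when $r$ does not exceed the braid index used, and is obtained by routing the $r$ strands through the braid closure when $\max\{p,q\}<r<p+q$ — which is exactly where the hypothesis $r<p+q$ is used. The core length of the torus block is at most $2m$.

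Next I would attach the twist block along those $r$ adjacent tracks, which form a flat cable of width $r$. It consists of $|s|$ copies of a planar full-twist gadget in which the cable, routed off to the side of the torus block, loops back across its own outgoing portion so that each of its $r$ strands meets each of the $r$ outgoing strands exactly once; by the relation between writhe and twist for framed curves, a single such planar loop adds precisely one full twist $\Delta^{2}$ to the cable, and I take the handedness of each gadget to match the sign of $s$. A gadget can be drawn with width-$1$ tracks that are immersed away from its single, disjoint fold line, and it contributes at most $2r$ to the core length; inserting $|s|$ of them contributes at most $2|s|\,r$. Reconnecting the cable to the torus block closes the diagram into a planar polygonal diagram of $T_{p,q;r,s}$, and the two contributions add to $\text{Len}(K'_1)\le 2m+2|s|\,r=2(\max\{p,q,r\}+|s|\,r)$.

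I expect essentially all the difficulty to lie in verifying that the assembled diagram really is an \emph{allowed} folded ribbon knot. One must produce a single global crossing assignment that is simultaneously consistent with the folding information at every fold, with the crossing information of the standard diagram of $T_{p,q;r,s}$, and with the requirement that the width-$1$ ribbon be immersed except at the pairwise-disjoint fold lines. Making these three demands compatible forces a case analysis according to the relative sizes of $p$, $q$ and $r$ — the regimes $r\le\min\{p,q\}$, $\min\{p,q\}<r\le\max\{p,q\}$, and $\max\{p,q\}<r<p+q$ behave differently at the seam where the two blocks meet — and it is also here that the sharp constant $2$ must be earned, by mitring every turn at $45^{\circ}$ and letting the torus and twist blocks share tracks without waste. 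Once legality is in hand, the length bookkeeping is routine.
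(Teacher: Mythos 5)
Your overall strategy---realize the underlying torus braid at a cost of $2w$ per strand and graft on a twist region costing $2w|s|$ per strand, for a total of $2(\max\{p,q,r\}+|s|r)$---is the same as the paper's, and your accounting (one full twist of the $r$-cable per $2r$ of core length) matches its folding types. But the proposal has genuine gaps at exactly the points you flag as ``difficulty.'' The claim that $T_{p,q}$ packs into a strip with core length $2\max\{p,q\}$ is supported only by an area-versus-crossing-number heuristic, and that heuristic cannot work: length $2p$ means only about $p$ fold events for a diagram with roughly $pq$ crossings, which is impossible unless the folds act on whole bunches of parallel strands at once. The paper's mechanism is the braid-word identity $(\sigma_1 \sigma_2 \cdots \sigma_{p-1})^q=(\sigma_1\cdots\sigma_{q-1})^q(\sigma_q\cdots\sigma_{p-1})(\sigma_{q-1}\cdots\sigma_{p-2})\cdots(\sigma_1\cdots\sigma_{p-q})$, which shows that only $q$ of the $p$ strands are mutually twisted (one full twist) while the other $p-q$ cross them as untwisted parallel bunches; this is what lets the braid be grouped into weighted bands, each folded once (types 1 and 3) for total length $2wp$, while leaving the closure arcs free so that $r$ adjacent coherent strands emerge where the twist block attaches. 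Citing Denne et al.\ gives $\text{Rib}(T_{p,q})\le 2p$ for the closed knot, but not the stronger ``open at one end with $r$ adjacent strands'' form that your seam requires; that form has to be built, not quoted.

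The regime $p<r<p+q$ is also not handled by ``widening the strip by $r-\max\{p,q\}$ extra parallel tracks'': the extra $r-p$ strands entering the twist region are not new tracks but strands of the torus braid rerouted through the braid closure and kinked (Lee's transformation), after which the untwisted part of the braid decomposes into three bunches of sizes $r-p$, $p+q-r$ and $r-q$ (summing to $r$) that must be assigned three \emph{different} folding types precisely so that the resulting ribbons do not overlap; this is where the torus-block cost becomes $2wr$ rather than $2wp$, and where allowedness is actually at stake. Since you explicitly defer the verification that the assembled object is an allowed folded ribbon knot---which is the substance of the theorem---the proposal as written is a correct plan in the spirit of the paper's proof, but not yet a proof.
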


Especially, when $r$ is less than or equal to $p-q$, we obtain sharper upper bounds.

\begin{theorem} \label{thm:twisted2}
For any positive integers $p,q,r$ with $r \leq p-q$ and nonzero interger $s$,
$$\emph{\text{Rib}}(T_{p,q;r,s}) \leq 2(p+(|s|-1)r).$$
\end{theorem}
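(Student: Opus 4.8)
\section*{Proof proposal for Theorem~\ref{thm:twisted2}}

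The plan is to refine the explicit folded ribbon diagram that witnesses Theorem~\ref{thm:twisted1}. First some reductions. Since $\text{Rib}(K)=\text{Rib}(\overline{K})$ for the mirror image $\overline{K}$ and the claimed bound depends on $s$ only through $|s|$, it suffices to treat one sign of $s$; I will describe the construction for $s\ge 1$ and note that the $s\le -1$ case is obtained by performing every twist (including the absorbed one below) with the opposite handedness. Next observe that $r\le p-q$ forces $q\le p-1$ and $r\le p-1$, so $\max\{p,q,r\}=p$; hence Theorem~\ref{thm:twisted1} already gives $\text{Rib}(T_{p,q;r,s})\le 2(p+|s|r)$, and the whole point is to save an additive $2r$. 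Equivalently, I want to exhibit a legal folded ribbon knot of width $w$, core isotopic to $T_{p,q;r,s}$, and length $w\cdot\bigl(2p+2(|s|-1)r\bigr)$; in slogan form, one of the $|s|$ full twists should be realized \emph{for free} inside the part of the diagram that carries the torus knot $T_{p,q}$.

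To do this I would decompose the diagram, as in the approach behind Theorem~\ref{thm:twisted1}, into a \emph{torus block} carrying the braid $(\sigma_1\cdots\sigma_{p-1})^q$ (of ribbon length $2p$ at unit width) followed by a \emph{twist block} carrying $|s|$ consecutive full twists on the $r$ leftmost strands (each of ribbon length $2r$). The construction then proceeds in three steps. (1)~Isolate inside the torus block a sub-ribbon consisting of $r$ mutually parallel, unknotted strands running straight through one full period of the braid; the hypothesis $r\le p-q$ is exactly what guarantees that some $r$ adjacent positions are never involved in a crossing during a full period, so this sub-ribbon can be straightened to an honest rectangular $r\times\ell$ patch of the diagram. (2)~Reroute that $r\times\ell$ patch through a single full-twist pattern on its $r$ strands, using the horizontal room that the bound $r\le p-q$ leaves free (the strands there are short relative to the $2p$ footprint), so that the modified diagram is still a legal folded ribbon of the same width, still has total length $2p$, and still has core $T_{p,q;r,1}$. (3)~Append the twist block carrying only the remaining $|s|-1$ full twists on the $r$ leftmost strands, picking up length $2(|s|-1)r$; the resulting core is $T_{p,q;r,s}$ and the total length is $2p+2(|s|-1)r$, as claimed.

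The main obstacle is Step~(2): converting ``there is enough room'' into an explicit piecewise-linear rerouting of the $r\times\ell$ patch into a full twist that (a) does not increase the width, (b) stays within the length-$2p$ budget, and (c) keeps the diagram a legal folded ribbon knot, i.e.\ the newly introduced fold lines are disjoint from each other and from the rest of the diagram and the induced crossing information is consistent with the folding data and with the diagram of $T_{p,q;r,s}$. I expect the bookkeeping here to run parallel to the twist-block analysis already needed for Theorem~\ref{thm:twisted1}, with the inequality $r\le p-q$ entering precisely at the two places flagged above: to produce the straight $r$-strand patch in Step~(1), and to supply the free horizontal space consumed by the twist in Step~(2).
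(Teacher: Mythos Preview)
Your strategy---absorb one of the $|s|$ full twists into the torus block at no extra length---is exactly the paper's idea, and the final arithmetic is right. Two concrete problems stand between your outline and an actual proof, and the paper's argument shows how to repair both.

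First, the justification in Step~(1) is false as stated: in $(\sigma_1\cdots\sigma_{p-1})^q$ every generator $\sigma_i$ appears, so no set of adjacent positions avoids all crossings. What is true is the weighted-band statement: the closed torus braid decomposes into a kinked weighted $q$-band (fold type~3) and an unkinked weighted $(p-q)$-band (fold type~1), and $r\le p-q$ lets the $r$ twisted strands fit inside the latter. Second, and more seriously, Steps~(2) and~(3) are acting on different strands. The unkinked $(p-q)$-band occupies positions $q+1,\ldots,p$, whereas the ``$r$ leftmost strands'' of Step~(3) are at positions $1,\ldots,r$; as written, the absorbed twist and the appended $|s|-1$ twists land on disjoint $r$-tuples, so the core you produce is not $T_{p,q;r,s}$. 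The paper fixes this by first sliding the entire twist box once along the closed braid through $(\sigma_1\cdots\sigma_{p-1})^q$, which shifts it by $q$ positions onto strands $q+1,\ldots,q+r$; these lie inside the unkinked band precisely because $q+r\le p$, and now all $|s|$ twists live on the same $r$ strands.

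With the box so positioned, your Step~(2) is much simpler than you fear: there is no ``horizontal room'' bookkeeping. Fold types~1 and~3 have identical length $2w$, and type~3 already contributes one full twist, so one just upgrades the type-1 fold on the $r$-sub-band to a type-3 fold. The paper packages this together with the remaining $|s|-1$ rolls into a single ``combined type~3 and~4'' fold on the $r$-band, costing $2w|s|$ per strand; the $(p-q-r)$-band gets type~1, the $q$-band gets type~3 (each $2w$ per strand), and all $p$ closure arcs are connected directly at zero cost. The total is $2w|s|r + 2w(p-r) = 2w\bigl(p+(|s|-1)r\bigr)$.
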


If $r$ is equal to 1, then $T_{p,q;r,s}$ has the same knot type with $T_{p,q}$ regardless of the value of $s$.
This implies that when we take the value of $s$ as 1 in Theorem~\ref{thm:twisted2}, we have the result that $\text{Rib}(T_{p,q}) \leq 2p$.
It is also found by Denne el al~\cite{DHLM}.

In Section~\ref{sec:preliminary}, we introduce a twisted torus knot and its related results.
Moreover, we give some notations and the folding rules which is needed to construct the folded ribbon knot.
In Section~\ref{sec:proof},
we prove Theorem~\ref{thm:twisted1} and \ref{thm:twisted2}.

\section{Preliminary} \label{sec:preliminary}
In this section we introduce some basic concepts for our work.

\subsection{Twisted torus knots} \label{subsec:twisted}\ 

In~\cite{Dea}, Dean introduced the twisted torus knots for a study of Seifert-fibered surgery on knots.
For any relatively prime integers $p$ and $q$,
we can regard a torus knot $T_{p,q}$ to a simple closed curve on the genus one Heegaard surface of $S^3$ with slope $\frac{p}{q}$.
For an integer $r$ between 2 and $p+q$,
take an unknotted circle $C$ in the exterior of $T_{p,q}$ wrapping around $r$ adjacent strands of $T_{p,q}$.
For any non-zero integer $s$,
$S^3$ is obtained again by performing $-\frac{1}{s}$ surgery on $C$ in $S^3$.
However the surgery transforms $T_{p,q}$ into a new knot on the genus two Heegaard surface of $S^3$ as drawn in Figure~\ref{fig:21}.
This knot is called a twisted torus knot, and is denoted by $T_{p,q;r,s}$.

\begin{figure}[h!]
	\includegraphics{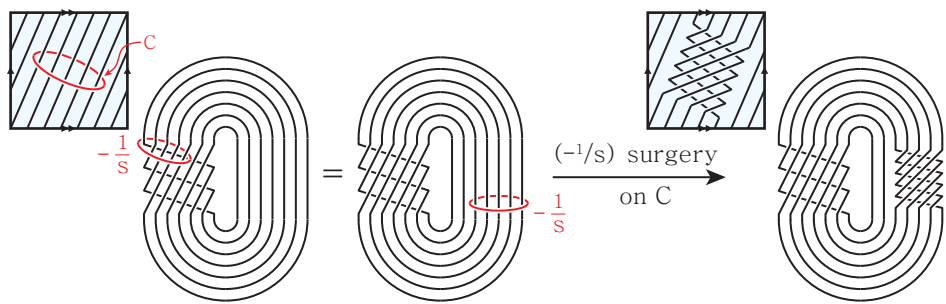}
	\caption{A twisted torus knot}
	\label{fig:21}
\end{figure}

We remark that when $r$ is equal to 1, $T_{p,q;r,s}$ has the same knot type as $T_{p,q}$ regardless of the value of $s$.
So $r$ is considered to be greater than or equal to 2 in the former studies for twisted torus knots.
For a knotted ribbon, when $r$ is equal to 1, $s$ affects the framing of the knot, which affects the twist of the ribbon.
Thus we include the case that $r$ is equal to 1.

Note that $T_{p,q;r,s}$ is a mirror image of $T_{p,-q;r,-s}$.
Since any knot has the same ribbonlength with its mirror image, we only consider the case that $p$ and $q$ are positive.
In~\cite{L1}, Lee showed that $T_{p,q;r,s}$ has the same knot type as $T_{q,p;r,s}$.
Thus we assume that $2 \leq q < p$.
Note that if $r$ is equal to $p$ or multiple of $q$, then $T_{p,q;r,s}$ is one of a cable knot, a torus knot, or a trivial knot.
In addition, Lee~\cite{L1,L2,L3,L4,L5,L6} discovered the conditions for $T_{p,q;r,s}$ to be either a satellite knot, a torus knot, or a trivial knot.

\begin{figure}[h!]
	\includegraphics{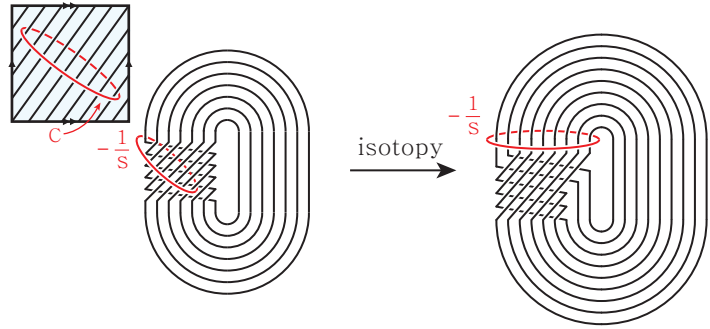}
	\caption{A twisted torus knot $T_{p,q;r,s}$ when $r>p$}
	\label{fig:22}
\end{figure}

If $r$ is less than or equal to $p$,
$T_{p,q;r,s}$ is $s$ full twist of $r$ adjacent strands of $T_{p,q}$.
When $r$ is greater than $p$,
we need the transformation shown in ~\cite{L3}.
Take the $r-p$ adjacent extra strands.
After kinking it, we obtain the braid with $r$ strands as drawn in Figure~\ref{fig:22}.
Then $T_{p,q;r,s}$ is obtained by $-\frac{1}{s}$ surgery on $C$.

\subsection{Weighted bands and folding rules} \label{subsec:rule}\

As shown in Figures~\ref{fig:21} and \ref{fig:22}, a twisted torus knot can be considered as the union of parallel strands.
So we define weighted strands and boxes as following.
A {\it weighted $n$ strand} is a bunch of $n$ parallel strands as drawn in Figure~\ref{fig:23} (a).
A single positive kinked weighted strand with $n$ is $n$ parallel strands with one full twist as drawn in Figure~\ref{fig:23} (b).
Note that a single negative kinked weighted strand means $-1$ full twist.
When a weighted strand with $n$ has $s$ full twists,
we represent this by attaching a {\it box with $s$} to the weighted strand as drawn in Figure~\ref{fig:23} (c).

\begin{figure}[h!]
	\includegraphics{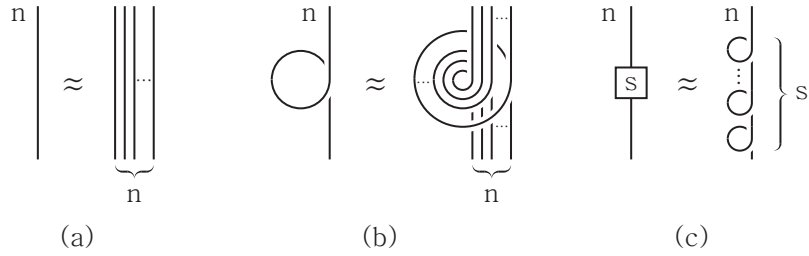}
	\caption{Weighted strands and a box on a weighted strand}
	\label{fig:23}
\end{figure}

Similar with the strands, we also use the weighted band.
A {\it weighted band} with $n$ is a bunch which consists of $n$ parallel bands as drawn in Figure~\ref{fig:24} (a).
Folding a weighted band with $n$ corresponds to  folding $n$ parallel bands at the same time as drawn in Figure~\ref{fig:24} (b).

\begin{figure}[h!]
	\includegraphics{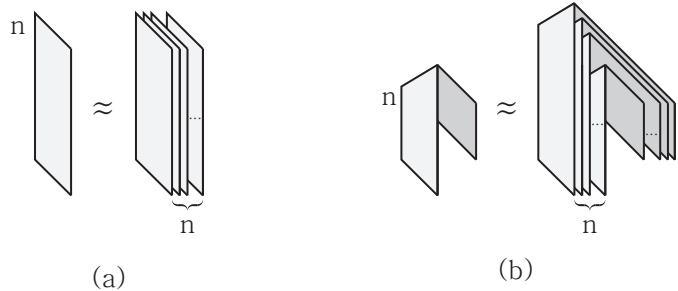}
	\caption{Weighted bands}
	\label{fig:24}
\end{figure}

Now we give some folding types to construct a folded ribbon knot.
Folding types 1, 2 and 3 are represented in Figure~\ref{fig:25}.
We usually use the folding type 1 for a non-twisted band.
However, if there are two non-twisted bands which are not parallel, then the overlapping part occurs when both of them use the folding type 1.
Therefore, in this case, we use the folding type 1 for one band, and the folding type 2 for the other.
Since the folding type 3 in the figure has one full twist, we use this folding type for a positively twisted band.
For a negatively twisted band, we fold the opposite direction for each folding.
The length of each of three folding types is equal to $2w$ where $w$ is the width of the ribbon.

\begin{figure}[h!]
	\includegraphics{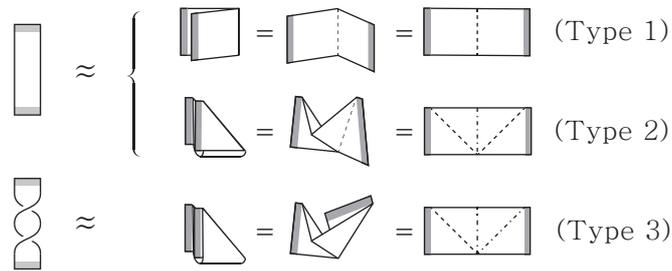}
	\caption{Folding types 1, 2 and 3}
	\label{fig:25}
\end{figure}

Figure~\ref{fig:26} shows the folding type 4 for a positive $s$.
This type is used for a box with $s$ on a weighted band with width $w$.
We remark that if we take the opposite direction of each folding in the process of the folding type 4 with $s$, then it is equal to the folding type 4 with $-s$.
So it is sufficient to consider the case that $s$ is positive.
Note that except the rolled part, it is the same with the folding type 3.
Thus the rolled part is corresponding to $s-1$ full twists.
Since rolling a band once corresponds to one full twist, the ribbonlength of the folding type 4 for a box with $s$ on the weighted band is equal to $2ws$ where $w$ is the width of the ribbon.

\begin{figure}[h!]
	\includegraphics{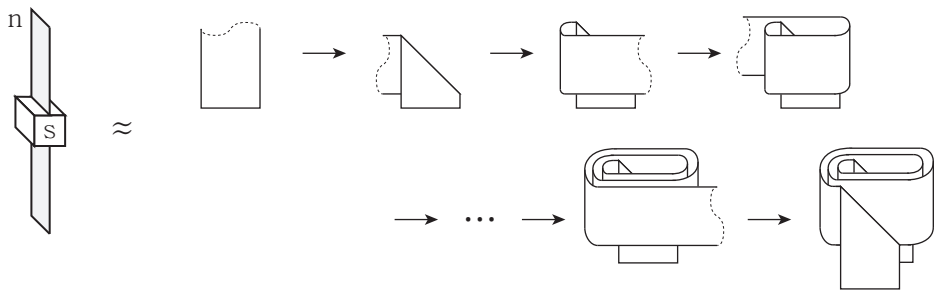}
	\caption{A box on the weighted band and the folding type 4}
	\label{fig:26}
\end{figure}

In the next section, we build a folded ribbon knot by combining bands with the four folding types.

\section{Proof of Theorems} \label{sec:proof}

In this section, we prove Theorem~\ref{thm:twisted1} and Theorem~\ref{thm:twisted2} using the observations in Section~\ref{sec:preliminary}.
As we mentioned in the previous section, assume that $2 \leq q < p$.
The braid word of $T_{p,q}$ is the form $(\sigma_1 \sigma_2 ... \sigma_{p-1})^q$.
By the braid relations,
$$
(\sigma_1 \sigma_2 ... \sigma_{p-1})^q
=(\sigma_1 \sigma_2 ... \sigma_{q-1})^q(\sigma_q \sigma_{q+1} ... \sigma_{p-1})(\sigma_{q-1} \sigma_{q} ... \sigma_{p-2})...(\sigma_1 \sigma_2 ... \sigma_{p-q}).
$$
Thus first $q$ strings are fully twisted, and the other $p-q$ strings are untwisted.
Using this fact,
we divide the braid of $T_{p,q;r,s}$ to bunches of fully twisted parallel strands.

\begin{proof}[Proof of Theorem~\ref{thm:twisted1}]
First assume that $p<r<p+q$.
Then the braid of $T_{p,q;r,s}$ is represented as Figure~\ref{fig:31} (a).
This braid is divided into the upper and lower braid such that the upper braid $B_U$ consists of $r$ strands with $s$ full twists, and the lower braid $B_L$ is the remaining part of the braid.
Note that $B_L$ is divided into three bunches, each bunch consists of parallel strands as drawn in Figure~\ref{fig:31} (b).
So the braid of $T_{p,q;r,s}$ is represented by weighted strands as drawn in Figure~\ref{fig:31} (c).
The closure of this braid is obtained by identifying the top horizontal line of $B_U$ and the bottom horizontal line of $B_L$ as drawn in Figure~\ref{fig:31} (d).

\begin{figure}[h!]
	\includegraphics{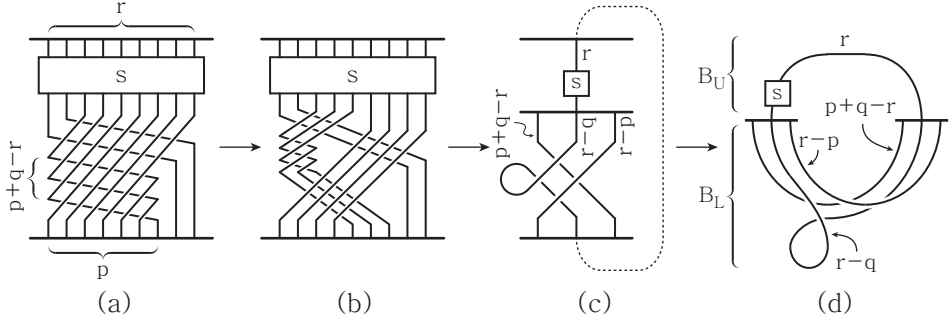}
	\caption{Weighted strands when $p < r < p+q$}
	\label{fig:31}
\end{figure}

To obtain the folded ribbon knot of $T_{p,q;r,s}$ with width $w$, we replace its all strands into bands.
So the weighted strands which consists of parallel strands are replaced into weighted bands which consists of parallel bands as drawn in Figure~\ref{fig:32} (a).
Now replace each weighted band into a ribbon which is folded in a suitable way as drawn in Figure~\ref{fig:32} (b).
In the braid, the weighted bands with $r-p$, $p+q-r$, $r-q$ and $r$ are folded into the folding type 1, 2, 3 and 4, respectively as drawn in Figure~\ref{fig:32} (c).
Two weighted bands with $p+q-r$ and $r-q$ are folded into the folding type 2 and 3, respectively, and they are put as drawn in Figure~\ref{fig:32} (b).
After replacing, we shrink the bands between $B_U$ and $B_L$.

\begin{figure}[h!]
	\includegraphics{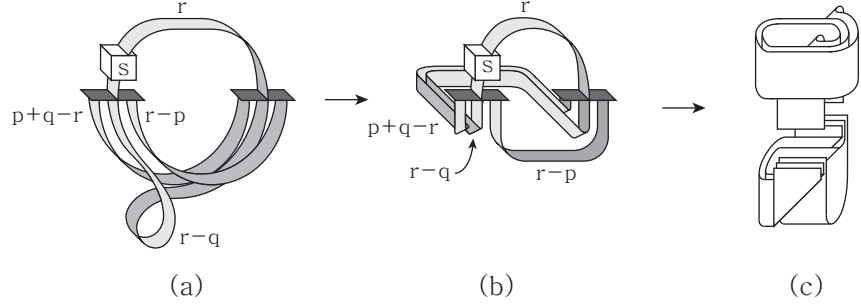}
	\caption{Weighted bands and a folded ribbon knot when $p<r<p+q$}
	\label{fig:32}
\end{figure}

Now we calculate the total length of this ribbon.
Note that each band with the folding type 1, 2 and 3 needs length $2w$, and a band with type 4 needs length $2w|s|$.
Since each portion of the ribbon in $B_L$ has the folding types 1, 2 or 3, the sum of length of them is equal to $2wr$.
Since each portion of the ribbon in $B_U$ has the folding types 4, the sum of length of them is equal to $2w|s|r$.
Therefore, the total length of this ribbon is equal to $2w(r+|s|r)$ where $w$ is the width of the ribbon.

Next we assume that $r \leq p$.
Then the braid of $T_{p,q;r,s}$ is represented as Figure~\ref{fig:33} (a).
Similar with the previous case, this braid is divided into the upper and lower braid $B_U$ and $B_L$, and each of them is divided into two bunches.
So the braid of $T_{p,q;r,s}$ is represented by weighted strands as drawn in Figure~\ref{fig:33} (b).
The closure of this braid is represented in Figure~\ref{fig:33} (c).
In the braid, the weighted bands with $p-q$, $q$ and $r$ are folded into the folding type 1, 3 and 4, respectively, and the remaining $p-r$ bands are directly connected as drawn in Figure~\ref{fig:33} (d).
After that, we shrink the bands between $B_U$ and $B_L$.

\begin{figure}[h!]
	\includegraphics{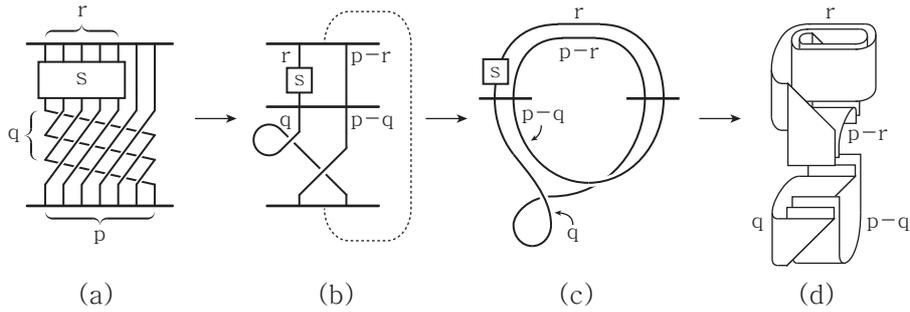}
	\caption{Weighted strands and a folded ribbon knot when $r \leq p$}
	\label{fig:33}
\end{figure}

Now we calculate the total length of this ribbon.
Since each portion of the ribbon in $B_L$ has the folding types 1 or 3, the sum of length of them is equal to $2wp$.
In $B_U$, $r$ portions have the folding type 4 and the others are directly connected.
Thus the sum of length of them is equal to $2w|s|r$.
Therefore, the total length of this ribbon is equal to $2w(p+|s|r)$ where $w$ is the width of the ribbon.
\end{proof}

If $r$ is less than or equal to $p-q$,
then we can reduce more length by modifying the braid of $T_{p,q;r,s}$.
Using this fact, we prove Theorem~\ref{thm:twisted2}.

\begin{proof}[Proof of Theorem~\ref{thm:twisted2}]
We assume that $r \leq p-q$.
Then the braid of $T_{p,q;r,s}$ is represented as Figure~\ref{fig:34} (a).
As in the figure, move the $s$-full twists along the closure of the braid to the top of the braid through the torus knot $T_{p,q}$.
In this way, it is shifted to the right by $q$ strands while passing the braid of the torus knot $T_{p,q}$.
We can take three weighted strands as drawn in Figure~\ref{fig:34} (b).
The closure of this braid is represented in Figure~\ref{fig:34} (c).

\begin{figure}[h!]
	\includegraphics{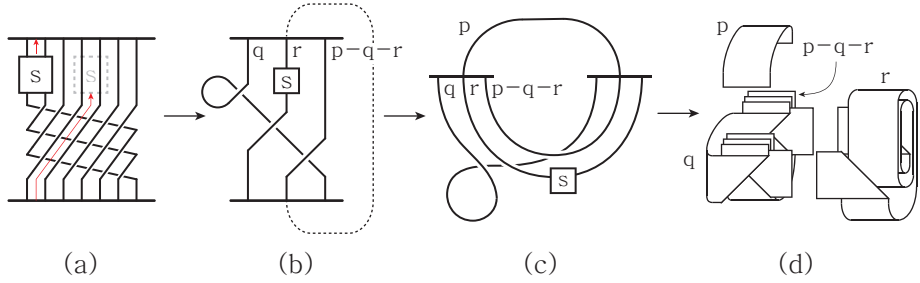}
	\caption{Weighted strands and a folded ribbon knot when $r \leq p-q$}
	\label{fig:34}
\end{figure}

First, the weighted bands with $r$ is folded into a combined form of the folding type 3 and 4.
In this process, since the folding type 3 makes one full-twist, we make the part of the folding type 4 have $s-1$ full twists when $s$ is positive.
If $s$ is negative, we use the mirror image.
Next, the weighted bands with $p-q-r$ and $q$ are folded into the folding type 1 and 3, respectively, and the remaining $p$ bands are directly connected as drawn in Figure~\ref{fig:34} (d).
After that, we shrink the extra bands.

Now we calculate the total length of this ribbon.
Note that the portion of the ribbon for the weighted band with $r$ is folded into a combined form of the folding type 3 and 4, and this portion needs length $2w|s|$.
So the sum of length of them is equal to $2w|s|r$.
Except for the portion that is directly connected, the remaining $p-r$ portions have the folding type 1 or 3.
Thus the sum of length of them is equal to $2w(p-r)$.
Therefore, the total length of this ribbon is equal to $2w(p+(|s|-1)r)$ where $w$ is the width of the ribbon.
\end{proof}

\bibliography{ribtt.bib} 

\begin{thebibliography}{10}

\bibitem{BS}
G.~Buck and J.~Simon.
\newblock Energy and length of knots.
\newblock {\em Lectures at KNOTS}, 96:219--234, 1997.

\bibitem{BS2}
G.~Buck and J.~Simon.
\newblock Thickness and crossing number of knots.
\newblock {\em Topology and its Applications}, 91(3):245--257, 1999.

\bibitem{Dea}
J.~C. Dean.
\newblock Small seifert-fibered dehn surgery on hyperbolic knots.
\newblock {\em Algebraic \& Geometric Topology}, 3(1):435--472, 2003.

\bibitem{DHLM}
E.~Denne, J.~C. Haden, T.~Larsen, and E.~Meehan.
\newblock Ribbonlength of families of folded ribbon knots.
\newblock {\em arXiv preprint arXiv:2010.04188}, 2020.

\bibitem{DKTZ}
E.~Denne, M.~Kamp, R.~Terry, and X.~C. Zhu.
\newblock Ribbonlength of folded ribbon unknots in the plane.
\newblock {\em Knots, Links, Spatial Graphs, and Algebraic Invariants}, 689:37,
  2015.

\bibitem{FHW}
M.~H. Freedman, Z.-X. He, and Z.~Wang.
\newblock Mobius energy of knots and unknots.
\newblock {\em Annals of mathematics}, 139(1):1--50, 1994.

\bibitem{K}
L.~H. Kauffman.
\newblock Minimal flat knotted ribbons.
\newblock In {\em Physical And Numerical Models In Knot Theory: Including
  Applications to the Life Sciences}, pages 495--506. World Scientific, 2005.

\bibitem{KMRT}
B.~Kennedy, T.~W. Mattman, R.~Raya, and D.~Tating.
\newblock Ribbonlength of torus knots.
\newblock {\em Journal of Knot Theory and Its Ramifications}, 17(01):13--23,
  2008.

\bibitem{L4}
S.~Lee.
\newblock Twisted torus knots t (p, q; kq, s) are cable knots.
\newblock {\em Journal of Knot Theory and Its Ramifications}, 21(01):1250005,
  2012.

\bibitem{L5}
S.~Lee.
\newblock Twisted torus knots with essential tori in their complements.
\newblock {\em Journal of Knot Theory and Its Ramifications}, 22(08):1350041,
  2013.

\bibitem{L6}
S.~Lee.
\newblock Twisted torus knots that are unknotted.
\newblock {\em International Mathematics Research Notices},
  2014(18):4958--4996, 2014.

\bibitem{L3}
S.~Lee.
\newblock Torus knots obtained by twisting torus knots.
\newblock {\em Algebraic \& Geometric Topology}, 15(5):2817--2836, 2015.

\bibitem{L1}
S.~Lee.
\newblock Knot types of twisted torus knots.
\newblock {\em Journal of Knot Theory and Its Ramifications}, 26(12):1750074,
  2017.

\bibitem{L2}
S.~Lee.
\newblock Satellite knots obtained by twisting torus knots: Hyperbolicity of
  twisted torus knots.
\newblock {\em International Mathematics Research Notices}, 2018(3):785--815,
  2018.

\bibitem{O1}
J.~O'hara.
\newblock Energy of a knot.
\newblock {\em Topology}, 30(2):241--247, 1991.

\bibitem{O2}
J.~O'Hara.
\newblock Family of energy functionals of knots.
\newblock {\em Topology and its Applications}, 48(2):147--161, 1992.

\bibitem{O3}
J.~O'Hara.
\newblock Energy functionals of knots ii.
\newblock {\em Topology and its Applications}, 56(1):45--61, 1994.

\end{thebibliography}
\bibliographystyle{abbrv}
	
\end{document}